\long\def\forget#1{}
\theoremstyle{plain}
\newtheorem{theorem}{Theorem}[section]
\newtheorem{proposition}[theorem]{Proposition}
\theoremstyle{definition}
\newtheorem{definition}[theorem]{Definition}
\newtheorem{bigexample}[theorem]{Example}
\theoremstyle{remark}
\newtheorem{remark}[theorem]{Remark}
\newcounter{zahl}
\def\theenumi{(\alph{enumi})}
\def\p@enumii{\theenumi}
\newcommand{\DS}{\displaystyle}
\newcommand{\TS}{\textstyle}
\newcommand{\SSC}{\scriptscriptstyle}
\DeclareMathOperator{\Frob}{Frob}
\DeclareMathOperator{\Grass}{Grass}
\DeclareMathOperator{\Hom}{Hom}
\DeclareMathOperator{\Lie}{Lie}
\DeclareMathOperator{\Var}{V}
\newcommand{\alg}{{\rm alg}}
\newcommand{\an}{{\rm an}}
\newcommand{\cris}{{\rm cris}}
\DeclareMathOperator{\id}{\,id}
\DeclareMathOperator{\ord}{ord}
\newcommand{\rig}{{\rm rig}}
\DeclareMathOperator{\rk}{rk}
\renewcommand{\phi}{\varphi}
\renewcommand{\epsilon}{\varepsilon}
\newcommand{\BOne} {{\mathchoice{\hbox{\rm1\kern-2.7pt l\kern.9pt}}
                              {\hbox{\rm1\kern-2.7pt l\kern.9pt}}
                              {\hbox{\scriptsize\rm1\kern-2.3pt l\kern.4pt}}
                              {\hbox{\scriptsize\rm1\kern-2.4pt l\kern.5pt}}}}
\newcommand{\BD}{{\mathbb{D}}}
\newcommand{\BF}{{\mathbb{F}}}
\newcommand{\BN}{{\mathbb{N}}}
\newcommand{\BQ}{{\mathbb{Q}}}
\newcommand{\BX}{{\mathbb{X}}}
\newcommand{\BZ}{{\mathbb{Z}}}
\newcommand{\bA}{{\mathbf{A}}}
\newcommand{\bB}{{\mathbf{B}}}
\newcommand{\bE}{{\mathbf{E}}}
\newcommand{\calD}{{\mathcal{D}}}
\newcommand{\CF}{{\mathcal{F}}}
\newcommand{\CG}{{\mathcal{G}}}
\newcommand{\CM}{{\mathcal{M}}}
\newcommand{\CN}{{\mathcal{N}}}
\newcommand{\CO}{{\mathcal{O}}}
\newcommand{\CS}{{\mathcal{S}}}
\newcommand{\rbij}{\mbox{\mathsurround=0pt \;$-\hspace{-0.75em}\stackrel{\sim\quad}{\longrightarrow}$\;}}
\let\setminus\smallsetminus
\newcommand{\es}{\enspace}
\newcommand{\dual}{^{\SSC\lor}}
\newcommand{\wt}[1]{{\widetilde{#1}}}
\newcommand{\invlim}[1][]{\ifthenelse{\equal{#1}{}}
{\DS \lim_{\longleftarrow}}
{\DS \lim_{\underset{#1}{\longleftarrow}}}
}
\newcommand{\dirlim}[1][]{\ifthenelse{\equal{#1}{}}
{\DS \lim_{\longrightarrow}}
{\DS \lim_{\underset{#1}{\longrightarrow}}}
}
\newcommand{\dotBD}{\vbox{\hbox{\kern2pt\bf.}\vskip-4.5pt\hbox{$\BD$}}}
\def\?{\ 
???\ \immediate\write16{}
\immediate\write16{Warning: There was still a question mark . . . }
\immediate\write16{}}
\DeclareMathOperator{\Nilp}{\CN \!{\it ilp}}
\DeclareMathOperator{\Sets}{\CS \!{\it ets}}
\def\isoto{\rbij}
\newbox\mybox
\def\arrover#1{\mathrel{
       \setbox\mybox=\hbox spread 1.4em{\hfil$\scriptstyle#1$\hfil}
       \vbox{\offinterlineskip\copy\mybox
             \hbox to\wd\mybox{\rightarrowfill}}}}
\theoremstyle{plain}
\newtheorem{problem}[theorem]{Problem}
\begin{document}

\author{Urs Hartl
\footnote{The author acknowledges support of the Deutsche Forschungsgemeinschaft in form of DFG-grant HA3006/2-1}
}

\title{On period spaces for $p$-divisible groups}

\maketitle

\begin{abstract}
\noindent
In their book 
Rapoport and Zink constructed rigid analytic period spaces for Fontaine's filtered isocrystals, and period morphisms from moduli spaces of $p$-divisible groups to some of these period spaces. We determine the image of these period morphisms, thereby contributing to a question of Grothendieck. We give examples showing that only in rare cases the image is all of the Rapoport-Zink period space.

\medskip

\begin{center}
\bfseries R\'esum\'e
\end{center}

\noindent 
Dans leur livre, Rapoport et Zink ont construits des espaces des p\'eriodes, rigides ana\-lytiques pour les isocristaux filtr\'es de Fontaine. Egalement ils ont construits des morphismes des p\'eriodes entre des espaces modulaires des groupes de Barsotti-Tate et certaines de leurs espaces des p\'eriodes. Dans cet article nous d\'eterminons l'image des morphismes des p\'eriodes, contribuant ainsi \`a une question de Grothendieck. Nous pr\'esentons des examples montrant que l'image ne co\"incide que rarement avec tout l'espace des p\'eriodes de Rapoport-Zink.

\medskip

\noindent
{\it Mathematics Subject Classification (2000)\/}: 
11S20,  
(14G22,  
14L05,  
14M15)  
\end{abstract}

\section{A question of Grothendieck}

 Fix a Barsotti-Tate group $\BX$ over $\BF_p^\alg$ of height $h$ and dimension $d$. Let $W:=W(\BF_p^\alg)$ be the ring of Witt vectors and let $K_0:=W[\frac{1}{p}]$. Let $\CO_K$ be a complete discrete valuation ring with residue field $\BF_p^\alg$ and fraction field $K$ of characteristic $0$. To every Barsotti-Tate group $X$ over $\CO_K$ with $\BX\cong X\otimes_{\CO_K}\BF_p^\alg=:X_{\BF_p^\alg}$ the theory of Grothendieck-Messing~\cite{Messing} associates an extension
\[
\xymatrix {
0\ar[r] & (\Lie X\dual)_K\dual \ar[r] & \BD(\BX)_K\ar[r] & \Lie X_K\ar[r] & 0
}
\]
where $\BD(\BX)_K$ is the crystal of Grothendieck-Messing evaluated on $K$. We denote by $\CF$  the Grassmannian of $(h-d)$-dimensional subspaces of $\BD(\BX)_{K_0}$.

\begin{problem} \label{ProblemGroth}
(A.~Grothendieck \cite[Remarque 3, p.\ 435]{Grothendieck2})
Describe the subset of $\CF$ formed by the points $(\Lie X\dual)_K\dual$ where $X$ is any deformation of $\BX$ over any complete discrete valuation ring $\CO_K$ with residue field $\BF_p^\alg$ and fraction field $K$ of characteristic $0$.
\end{problem}

This problem is yet unsolved. However, a contribution was made by Rapoport and Zink~\cite{RZ} in the following way. Set $D:=\BD(\BX)_{K_0}$ and $\phi_D:=\BD(\Frob_\BX)_{K_0}$. Let $L_K\in\CF$ be a point defined over a complete, rank one valued extension $K$ of $K_0$ (which not necessarily is discrete). View $L_K$ as a $K$-subspace of $D_K:=D\otimes_{K_0}K$. One defines the \emph{Newton slope} $t_N(D,\phi_D,L_K):=\ord_p(\det\phi_D)$ and the \emph{Hodge slope} $t_H(D,\phi_D,L_K):=\dim_K L_K-\dim_K D_K$. Following Fontaine \cite{Fontaine79} and Rapoport-Zink~\cite[1.18]{RZ}, the point $L_K\in\CF$ is called \emph{weakly admissible} if
\begin{eqnarray*}
t_N(D,\phi_D,L_K)&=&t_H(D,\phi_D,L_K)\es=\es-d\qquad\text{and}\\
t_N(D',\phi_D|_{D'},L_K\cap D'_K)&\ge& t_H(D',\phi_D|_{D'},L_K\cap D'_K)
\end{eqnarray*}
for all $\phi_D$-stable $K_0$-subspaces $D'\subset D$. Let $\CF^\rig$ be the rigid analytic space, and $\CF^\an$ the $K_0$-analytic space in the sense of Berkovich~\cite{Berkovich,Berkovich2} associated with $\CF$. 

\begin{theorem} (Rapoport-Zink~\cite[Proposition 1.36]{RZ})\\
The set $\CF^\rig_{wa}:=\{\,L_K\in\CF^\rig:L_K \text{ is weakly admissible}\,\}$ is an open rigid analytic subspace. 
\end{theorem}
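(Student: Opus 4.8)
The plan is to separate the two conditions in the definition of weak admissibility. The first one, $t_N(D,\phi_D,L_K)=t_H(D,\phi_D,L_K)=-d$, involves only $t_N(D,\phi_D,L_K)=\ord_p(\det\phi_D)$ and $t_H(D,\phi_D,L_K)=\dim_K L_K-\dim_K D_K=(h-d)-h=-d$, neither of which depends on the point $L_K$; hence it holds at every point of $\CF$ or at none, and in the latter case $\CF^\rig_{wa}=\emptyset$ is trivially open. So I may assume it holds identically and treat only the second condition, which (being vacuous for $D'=0$ and, granting the first condition, automatic for $D'=D$) says
\[
\dim_K(L_K\cap D'_K)\ \le\ t_N(D',\phi_D|_{D'})+\dim_{K_0}D'
\]
for every $\phi_D$-stable $K_0$-subspace $D'$ with $0\subsetneq D'\subsetneq D$. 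The obstacle is that there may be infinitely many such $D'$, so this is not visibly a finite intersection of open loci; the real work is to reorganise it into finitely many closed conditions on the complement.

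For $e=1,\dots,h-1$ let $\mathcal G_e:=\Grass(e,D)$, a projective $K_0$-scheme with tautological rank-$e$ subbundle $\mathcal M_e\subset D\otimes_{K_0}\CO_{\mathcal G_e}$, and let $Y_e\subset\mathcal G_e$ be the closed subscheme of $\phi_D$-stable subspaces. By the Dieudonn\'e--Manin classification $(D,\phi_D)\cong\bigoplus_\lambda D_\lambda^{\oplus m_\lambda}$, every sub-isocrystal respects this decomposition, and therefore $Y_e$ is a finite disjoint union of Grassmannians over the division algebras $\End(D_\lambda)$; in particular $Y_e$ is proper, and the Newton slope $t_N$ is constant on each of these finitely many pieces, taking values $t$ in a finite set $T_e$. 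Write $Y_{e,t}\subseteq Y_e$ for the clopen union of the pieces on which $t_N\equiv t$. Passing to rigid spaces, on $\CF^\rig\times_{K_0}\mathcal G_e^\rig$ I have the pullback of the universal rank-$(h-d)$ subspace $\mathcal L\subset D\otimes_{K_0}\CO$ from $\CF^\rig$ and the pullback of $\mathcal M_e$; for each integer $c$ the locus $\{x:\dim(\mathcal L_x\cap\mathcal M_{e,x})\ge c\}$ is closed, being cut out by the vanishing of the $((h-d)+e-c+1)$-minors of the natural map $\mathcal L_x\oplus\mathcal M_{e,x}\to D$. Intersecting with $\CF^\rig\times Y_{e,t}^\rig$ (closed, since $Y_{e,t}$ is a clopen piece of the closed subscheme $Y_e$) and taking $c=t+e+1$ yields a closed subset $Z_{e,t}\subset\CF^\rig\times\mathcal G_e^\rig$: precisely the pairs $(L_K,D'_K)$ with $D'$ of dimension $e$ and Newton slope $t$ violating the displayed inequality.

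Finally I push forward along the projection $p\colon\CF^\rig\times_{K_0}\mathcal G_e^\rig\to\CF^\rig$. Since $\mathcal G_e$ is projective, $\mathcal G_e^\rig$ is proper over $K_0$, hence $p$ is proper and $p(Z_{e,t})$ is closed in $\CF^\rig$. Therefore
\[
\CF^\rig_{wa}\ =\ \CF^\rig\ \setminus\ \bigcup_{e=1}^{h-1}\ \bigcup_{t\in T_e}\ p(Z_{e,t})
\]
is the complement of a \emph{finite} union of closed analytic subsets, and so is an open rigid analytic subspace of $\CF^\rig$. The step I expect to be the crux is the finiteness reduction in the middle paragraph — that the family $Y_e$ of $\phi_D$-stable subspaces of fixed dimension is proper and that $t_N$ is locally constant on it, which is exactly where the Dieudonn\'e--Manin classification is needed — together with the standard but non-formal fact that a proper morphism of rigid spaces carries closed analytic subsets to closed analytic subsets. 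The remaining ingredients, upper semicontinuity of $\dim(\mathcal L_x\cap\mathcal M_{e,x})$ and the identity $t_H=-d$ on $\CF$, are routine.
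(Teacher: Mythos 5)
Your overall strategy---separate the two conditions, parametrize sub-isocrystals by a proper space via Dieudonn\'e--Manin, express the bad locus as the image of a degeneracy locus under a proper projection---is indeed the Rapoport--Zink strategy. But there is a genuine gap in the middle step. First, $Y_e$ is \emph{not} a closed subscheme of $\Grass(e,D)$ over $K_0$: since $\phi_D$ is $\sigma$-semilinear rather than $K_0$-linear, the condition ``$\phi_D(\mathcal M_e)\subset\mathcal M_e$'' cannot be imposed on the universal subbundle over a general $K_0$-algebra (there is no lift of Frobenius to $\CO_{\Grass}$). What Dieudonn\'e--Manin actually gives you is a proper $\BQ_p$-scheme $Y_{e,t}$ (a finite union of Brauer--Severi-type Grassmannians over the division algebras $\End(D_\lambda)$) whose $\BQ_p$-rational points biject with the sub-isocrystals, together with a morphism $\iota\colon Y_{e,t}\times_{\BQ_p}K_0\to\Grass(e,D)_{K_0}$. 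Second, and more seriously, once you replace $Y_{e,t}^\rig$ by $(Y_{e,t}\times_{\BQ_p}K_0)^\rig$ the asserted equality $\CF^\rig_{wa}=\CF^\rig\setminus\bigcup p(Z_{e,t})$ \emph{fails}: $p(Z_{e,t})$ is in general strictly larger than the locus where some actual sub-isocrystal violates the inequality, because $K$-points of $Y_{e,t}$ that are not $\BQ_p$-rational produce subspaces $\iota(y)\subset D_K$ that are not sub-isocrystals and thus cannot witness failure of weak admissibility.

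A concrete counterexample: take $(D,\phi_D)\cong D_\lambda^{\oplus 2}$ with $D_\lambda$ simple of slope $-1/2$ and $\dim_{K_0}D_\lambda=2$, so $h=4$, $d=2$, $\CF=\Grass(2,4)$, and $E_\lambda:=\End(D_\lambda)$ is the quaternion algebra over $\BQ_p$. Then $Y_{2,-1}$ is the Brauer--Severi curve of $E_\lambda$, $Y_{2,-1}\times_{\BQ_p}K_0\cong\BP^1_{K_0}$, but $Y_{2,-1}(\BQ_p)=\BP^1(E_\lambda)$ embeds as a \emph{proper} subset of $\BP^1(K_0)$. Choosing $W_0\in\BP^1(K_0)$ outside that image and $L:=\iota(W_0)=W_0\otimes D_\lambda$, one checks that $L$ meets every genuine sub-isocrystal $V\otimes D_\lambda$, $V\in\BP^1(E_\lambda)$, in dimension $0$, so $L$ is weakly admissible; yet $(L,W_0)\in Z_{2,-1}$ since $\dim(L\cap\iota(W_0))=2\ge t+e+1$, so $L\in p(Z_{2,-1})$. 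Thus only the inclusion $\CF^\rig_{wa}\subset\CF^\rig\setminus\bigcup p(Z_{e,t})$ holds, and your argument proves that a \emph{smaller} set is open, not that $\CF^\rig_{wa}$ is. The repair (as in \cite{RZ} and in Dat--Orlik--Rapoport's book) requires either working with the compact topological space $Y_{e,t}(\BQ_p)$ itself, mapping it continuously into the Berkovich space $\Grass(e,D)^\an$ and using that continuous images of compacts are closed, or reorganizing the argument through the semicontinuity of the Harder--Narasimhan vector; in either case one must ensure the witnessing subspaces are genuinely $\BQ_p$-rational points of $Y_{e,t}$, which the scheme-theoretic image $p(Z_{e,t})$ does not do.
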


The space $\CF^\rig_{wa}$ is an example for the \emph{$p$-adic period domains} constructed more generally in \cite[Proposition 1.36]{RZ} for arbitrary filtered isocrystals. The proof of Rapoport and Zink even shows that $\CF^\rig_{wa}$ is the rigid analytic space associated to an open $K_0$-analytic subspace $\CF^\an_{wa}\subset\CF^\an$; see \cite[Proposition 1.3]{HartlRZ}. The period domain $\CF^\rig_{wa}$ contains the set of Grothendieck's problem~\ref{ProblemGroth}. To explain this let $\Nilp_W$ be the category of $W$-schemes on which $p$ is locally nilpotent. For an $S\in\Nilp_W$ we set $\bar S:=\Var(p)\subset S$.

\begin{theorem}\label{ThmRZ} (Rapoport-Zink~\cite[Theorem 2.16]{RZ})
The functor $\CG:\Nilp_W\to \Sets$
\begin{eqnarray*}
S&\longmapsto& \bigl\{\,\text{isomorphism classes of pairs }(X,\rho)\text{ where }X\text{ is a Barsotti-Tate}\\
&&\qquad\text{ group over $S$ and }\rho: \BX_{\bar S}\to X_{\bar S}\text{ is a quasi-isogeny}\,\bigr\}
\end{eqnarray*}
is pro-representable by a formal scheme locally formally of finite type over $W$.
\end{theorem}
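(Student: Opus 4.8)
The plan is to realise $\CG$ as a formally smooth formal deformation, over $\Spf W$, of an explicit scheme over $W/pW=\BF_p^\alg$; the inputs are crystalline Dieudonn\'e theory for the $p$-divisible group together with a rigidity property that makes the quasi-isogeny $\rho$ deformation-theoretically invisible. I would begin with the rigidity of quasi-isogenies (Drinfeld): for a nilpotent closed immersion $S_0\hookrightarrow S$ in $\Nilp_W$ and $p$-divisible groups $G,H$ over $\bar S$, every quasi-isogeny $G_{\bar S_0}\to H_{\bar S_0}$ extends uniquely to a quasi-isogeny $G\to H$ over $\bar S$. Two consequences: $\Aut(X,\rho)=\{1\}$, so that $\CG$ is an fpqc sheaf of sets and no stack appears; and lifting a point of $\CG(S_0)$ to $\CG(S)$ is the same as lifting the underlying $p$-divisible group (which has the same height $h$ and dimension $d$ as $\BX$, $\rho$ being a quasi-isogeny), the quasi-isogeny being then supplied for free. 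Finally, the height of $\rho$ is locally constant, so $\CG=\coprod_{j\in\BZ}\CG^{(j)}$ with $\CG^{(j)}$ the open-and-closed subfunctor where $\rho$ has height $j$, and it is enough to represent each $\CG^{(j)}$ by a formal scheme formally of finite type over $W$.

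For the infinitesimal theory I would invoke Grothendieck--Messing~\cite{Messing}, extended from divided-power thickenings to arbitrary nilpotent thickenings in $\Nilp_W$ by means of crystalline Dieudonn\'e theory (de Jong) or the theory of displays (Zink): a deformation of a $p$-divisible group $X_0$ along a nilpotent thickening $S_0\hookrightarrow S$ is the same datum as a lift of the Hodge filtration $\omega_{X_0}\subset\BD(X_0)_{S_0}$ to a locally free direct summand of corank $d$ of $\BD(X_0)_S$. Such lifts exist locally and form a pseudo-torsor under a vector group, so, combined with the rigidity above, this shows that $\CG$ is formally smooth over $\Spf W$ of relative dimension $d(h-d)$; the ``local model'' is the Grassmannian of $d$-dimensional subspaces of an $h$-dimensional space.

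It remains to pin down the reduced special fibre and assemble. Over $\BF_p^\alg$-schemes, on the closed locus of $\CG^{(j)}$ where $p^N\rho$ is an honest isogeny --- these loci exhausting $\CG^{(j)}$ as $N\to\infty$ --- the pair $(X,\rho)$ is equivalent to $H:=\ker(p^N\rho)\subset\BX[p^{2N}]$, a finite locally free subgroup scheme of order $p^{Nh+j}$, together with the identification $X\cong\BX/H$. Hence this subfunctor is representable by a quasi-projective $\BF_p^\alg$-scheme --- a closed subscheme of the Quot scheme of subgroup schemes of $\BX[p^{2N}]$ --- and letting $N$ grow one obtains the reduced special fibre $\CG^{(j)}_\red$ as a scheme locally of finite type over $W/pW$. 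Finally, a formally smooth fpqc sheaf on $\Nilp_W$ whose reduced special fibre is a scheme locally of finite type over $W/pW$ is pro-representable by a formal scheme locally formally of finite type over $W$: one takes for the completed local ring at a point of $\CG^{(j)}_\red$ the versal deformation of $(X,\rho)$ there --- pro-representable by Schlessinger's criterion since $\Aut(X,\rho)=\{1\}$, and a power series ring $W\dbl t_1,\dots,t_{d(h-d)}\dbr$ at the closed points by the analysis above --- and glues these along $\CG^{(j)}_\red$ using the functoriality of the Dieudonn\'e crystal $\BD$. The disjoint union over $j\in\BZ$ is $\CG$.

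The step I expect to be the main obstacle is this last assembly: upgrading the pointwise and infinitesimal deformation picture to an honest formal scheme over \emph{all} of $\Nilp_W$. On the one hand, Grothendieck--Messing directly controls only divided-power thickenings, whereas $\Nilp_W$ forces arbitrary nilpotent thickenings upon us, so one genuinely needs the full crystalline Dieudonn\'e theory; absorbing this is the bulk of the technical work. On the other hand, one must check that the local versal deformations are canonically identified on overlaps and are compatible with arbitrary base change --- which is exactly where the rigidity of quasi-isogenies and the functoriality of $\BD$ in $S$ enter decisively --- and that the increasing union of finite-type pieces above genuinely carries the structure of a scheme locally of finite type, a local boundedness assertion that is the geometric substance of the phrase ``locally formally of finite type''.
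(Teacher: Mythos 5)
The paper does not prove this theorem; it is quoted as Rapoport--Zink~\cite[Theorem~2.16]{RZ} and used as a black box, so there is no ``paper's own proof'' to compare against. What you have written is a reconstruction of the argument in [RZ, Ch.~2], and it does capture the right ingredients: Drinfeld rigidity (killing both automorphisms of $(X,\rho)$ and the deformation theory of $\rho$), the open-closed decomposition $\CG=\coprod_j\CG^{(j)}$ by height of $\rho$, exhausting $\CG^{(j)}$ by the closed subfunctors where $p^N\rho$ and $p^N\rho^{-1}$ are honest isogenies and realizing these over $\BF_p^\alg$ via finite locally free subgroup schemes $H=\ker(p^N\rho)\subset\BX[p^{2N}]$ of order $p^{Nh+j}$, and the unobstructed Grothendieck--Messing deformation theory. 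One small caveat: formal smoothness over $\Spf W$ of relative dimension $d(h-d)$ holds here because $\CG$ carries no additional EL/PEL structure; it is a feature of this basic case and not of Rapoport--Zink spaces in general.

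The genuine gap is the one you flag yourself, and it is not cosmetic. From ``formally smooth functor, scheme-theoretic reduced fibre locally of finite type over $\BF_p^\alg$, and pro-representable deformation functor at each closed point'' one does not formally obtain ``pro-representable by a formal scheme locally formally of finite type over $W$.'' One must actually produce the formal scheme: show that each infinitesimal neighbourhood of $\CG^{(j)}_\red$ is representable by a scheme, that these form a Noetherian adic system, and that the local deformation rings glue compatibly over $\CG^{(j)}_\red$ and are compatible with arbitrary base change in $\Nilp_W$ --- the boundedness/finiteness statement that is the real content of the theorem. Your sketch names the obstacle accurately but does not close it; for the statement as given, closing it is the proof.
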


To $\CG$ one can associate a rigid analytic space $\CG^\rig$ by Berthelot's construction~\cite{Berthelot96}, and also a $K_0$-analytic space $\CG^\an$. Rapoport and Zink~\cite[5.16]{RZ} construct a period morphism $\breve\pi_1^\rig:\CG^\rig\to\CF^\rig_{wa}$ as follows. By the theory of Grothendieck-Messing~\cite{Messing}, the universal Barsotti-Tate group $X$ over $\CG$ gives rise to an extension
\[
\xymatrix {
0\ar[r] & (\Lie X\dual)_{\CG^\rig}\dual \ar[r] & \BD(X_{\bar\CG})_{\CG^\rig}\ar[r] & \Lie X_{\CG^\rig}\ar[r] & 0
}
\]
of locally free sheaves on $\CG^\rig$. The quasi-isogeny $\rho:X_{\bar\CG}\to\BX_{\bar\CG}$ induces by the crystalline nature of $\BD(\,.\,)$ an isomorphism $\BD(\rho)_{\CG^\rig}:\BD(X_{\bar\CG})_{\CG^\rig}\isoto\BD(\BX)_{\CG^\rig}$ and the image $\BD(\rho)_{\CG^\rig}(\Lie X\dual)_{\CG^\rig}\dual$ defines a $\CG^\rig$-valued point of $\CF^\rig$. By \cite[5.27]{RZ} the induced morphism $\CG^\rig\to\CF^\rig$ factors through $\CF^\rig_{wa}$. This is the period morphism $\breve\pi_1^\rig$. The same construction also gives a $K_0$-analytic period morphism $\breve\pi_1^\an:\CG^\an\to\CF^\an$.

\begin{theorem}\label{ThmGrothQuest}(Rapoport-Zink, Colmez-Fontaine, Breuil, Kisin)\\
The set of Grothendieck's Problem~\ref{ProblemGroth} is contained in the subset $\CF^\rig_{wa}$. The latter also equals the image of the period morphism $\breve\pi_1^\rig:\CG^\rig\to\CF^\rig$ in the sense of sets.
\end{theorem}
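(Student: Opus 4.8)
The plan is to squeeze both the set of Problem~\ref{ProblemGroth} and the period domain $\CF^\rig_{wa}$ between the image of $\breve\pi_1^\rig$ and $\CF^\rig_{wa}$ itself, so that all three coincide up to the (expected, and for the second assertion harmless) discrepancy between Grothendieck's set and the full image. The inclusion $\im\breve\pi_1^\rig\subseteq\CF^\rig_{wa}$ is already available: it is precisely \cite[5.27]{RZ}, which rests on Fontaine's theorem that the filtered isocrystal attached to a Barsotti--Tate group over a complete discrete valuation ring of mixed characteristic is weakly admissible, and it is the reason $\breve\pi_1^\rig$ is defined as a map to $\CF^\rig_{wa}$. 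For the opposite inclusion for Grothendieck's set, observe that a deformation $X$ of $\BX$ over such an $\CO_K$ comes equipped with the isomorphism $\BX\isoto X_{\BF_p^\alg}$, which in particular is a quasi-isogeny $\rho$; hence $(X,\rho)$ is an $\CO_K$-valued point of the functor $\CG$ of Theorem~\ref{ThmRZ}, and through Berthelot's construction it yields a $K$-rational point of $\CG^\rig$. By the very definition of $\breve\pi_1^\rig$ recalled above, this point maps to $(\Lie X\dual)_K\dual\in\CF^\rig$. Thus the set of Problem~\ref{ProblemGroth} is contained in $\im\breve\pi_1^\rig$, and a fortiori in $\CF^\rig_{wa}$.

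It then remains to prove $\CF^\rig_{wa}\subseteq\im\breve\pi_1^\rig$. Since $\CF^\rig$ is a rigid analytic space, its points are the classical ones, defined over finite --- hence discretely valued --- extensions $K$ of $K_0$, which (as $\BF_p^\alg$ is algebraically closed) again have residue field $\BF_p^\alg$. So fix a weakly admissible point $x\in\CF^\rig_{wa}$ over such a $K$; by definition it is a weakly admissible filtered isocrystal whose underlying isocrystal is $(D,\phi_D)=(\BD(\BX)_{K_0},\BD(\Frob_\BX)_{K_0})$ and whose filtration is the subspace $L_K\subset D_K$, so that its Hodge--Tate weights lie in an interval of length one. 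Now I would feed this into the two deep external inputs acknowledged in the attribution of the theorem: first, the theorem of Colmez and Fontaine that a weakly admissible filtered isocrystal is admissible, producing a crystalline $\BQ_p$-representation $V$ of $\Gal(\bar K/K)$ with $D_{\cris}(V)$ isomorphic as a filtered isocrystal to $x$; and second, Kisin's theorem in integral $p$-adic Hodge theory (building on Breuil's work), by which such a crystalline representation is the rational Tate module of a Barsotti--Tate group $X$ over $\CO_K$ --- concretely, any $\Gal(\bar K/K)$-stable lattice $T\subset V$ satisfies $T\cong T_pX$ for such an $X$.

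One then checks that this $X$ does what is wanted. Counting dimensions, $X$ has height $\dim_{\BQ_p}V=\dim_{K_0}D=h$ and dimension $\dim_K(D_K/L_K)=d$, matching $\BX$. The crystalline comparison identifies $D_{\cris}(V)$ with the Dieudonn\'e crystal $\BD(X_{\BF_p^\alg})_{K_0}$ as an isocrystal, and with $\BD(X_{\BF_p^\alg})_K$ together with the base change to $K$ of the Hodge filtration of $X$; combining this with the isomorphism $D_{\cris}(V)\cong x$ yields an isomorphism of isocrystals $\BD(X_{\BF_p^\alg})_{K_0}\cong(D,\phi_D)=\BD(\BX)_{K_0}$ under which the Hodge filtration of $X$ corresponds to $L_K$. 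Over the algebraically closed field $\BF_p^\alg$, Dieudonn\'e theory turns this isomorphism of isocrystals into a quasi-isogeny $\rho\colon\BX\to X_{\BF_p^\alg}$. Hence $(X,\rho)$ is an $\CO_K$-valued point of $\CG$, giving a point $y\in\CG^\rig$ over $K$ with $\breve\pi_1^\rig(y)=\BD(\rho)_{\CG^\rig}(\Lie X\dual)\dual=x$. As the points over finite extensions of $K_0$ exhaust $\CF^\rig_{wa}$, this proves $\CF^\rig_{wa}\subseteq\im\breve\pi_1^\rig$, and with the first step $\CF^\rig_{wa}=\im\breve\pi_1^\rig$.

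The main obstacle is this surjectivity step: it is not formal but rests entirely on the two non-trivial external theorems above, together with the compatibilities needed to see that the $p$-divisible group produced by Colmez--Fontaine and Kisin, equipped with the quasi-isogeny coming from Dieudonn\'e theory, is genuinely a point of the Rapoport--Zink functor $\CG$ sitting over the prescribed period $L_K$; in particular one must keep track of all the identifications between the crystalline period module $D_{\cris}(V)$, the Dieudonn\'e crystal $\BD(X_{\BF_p^\alg})$, the Hodge filtration of $X$, and $\rho$. A minor but essential point is that it suffices to reach the classical points of $\CF^\rig_{wa}$, which is why a discretely valued $K$ --- to which Colmez--Fontaine and Kisin apply directly --- is always at hand; the finer behaviour of $\im\breve\pi_1^\an$ as a subset of the Berkovich space $\CF^\an$, where it is in general strictly smaller than the weakly admissible locus, is the theme of the remainder of the paper.
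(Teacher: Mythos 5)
Your proof follows essentially the same route as the paper: first step into the image of $\breve\pi_1^\rig$ from Grothendieck's set (which is formal), then squeeze the image against $\CF^\rig_{wa}$ on classical points using Colmez--Fontaine to pass to a crystalline representation and Breuil--Kisin to realize it as the rational Tate module of a $p$-divisible group over $\CO_K$, with $D_\cris$ furnishing the identification of filtered isocrystals and Dieudonn\'e theory over $\BF_p^\alg$ giving the quasi-isogeny.

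One small technical point you gloss over, twice, is how the quasi-isogeny $\rho\colon\BX\to X_{\BF_p^\alg}$ over the closed point actually produces an $\CO_K$-valued point of $\CG$: by the definition of the Rapoport--Zink functor in Theorem~\ref{ThmRZ}, the quasi-isogeny is required over $\bar S=\Spec\CO_K/(p)$, not merely over $\Spec\BF_p^\alg$. Since $\CO_K/(p)\to\BF_p^\alg$ has nilpotent kernel (for $K/K_0$ finite), this is handled by rigidity of quasi-isogenies (Drinfeld's appendix, cited in the paper), which lifts $\rho$ uniquely over $\CO_K/(p)$; the paper invokes this explicitly in both halves of the argument, and your write-up should do the same, as without it $(X,\rho)$ is not literally a point of $\CG(\CO_K)$.
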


\begin{proof}
Let $L_K=(\Lie X\dual)_K\dual$ be a point in Grothendieck's set, given by a Barsotti-Tate group $X$ over $\CO_K$ with $\BX\cong X_{\BF_p^\alg}$. Over $\CO_K/(p)$ this isomorphism lifts by rigidity \cite[Appendix]{Drinfeld2} to a quasi-isogeny $\rho:\BX_{\CO_K/(p)}\to X_{\CO_K/(p)}$ and $(X,\rho)$ gives a point of $\CG(\CO_K)$. By construction $\breve\pi_1^\rig(X,\rho)_K=L_K$. So the point $L_K$ belongs to the image of $\breve\pi_1^\rig$, which in turn lies in $\CF^\rig_{wa}$. It remains to show that every $K$-valued point $L_K\in\CF^\rig_{wa}$ for $K/K_0$ finite lies in the image of $\breve\pi_1^\rig$. By the theorem of Colmez-Fontaine~\cite{CF} the filtered isocrystal $(D,\phi_D,L_K)$ is admissible, i.e. arises from a crystalline $p$-adic Galois representation $V$. By Breuil~\cite[Theorem 1.4]{Breuil2} there is a Barsotti-Tate group $X$ over $\CO_K$ and an isomorphism $T_p X_K\otimes_{\BZ_p}\BQ_p\cong V$ if $p>2$. Kisin~\cite[Corollary 2.2.6]{Kisin06} extended Breuil's theorem to $p=2$ and reproved the Colmez-Fontaine Theorem. Fontaine's functor $D_\cris$ transforms the isomorphism $T_p X_K\otimes_{\BZ_p}\BQ_p\cong V$ into an isomorphism $\bar\rho_\ast:\bigl(\BD(X)_{K_0},\BD(\Frob_X)_{K_0},(\Lie X\dual)_K\dual\bigr)\isoto(D,\phi_D,L_K)$ of filtered isocrystals. This defines a quasi-isogeny $\bar\rho:\BX\to X_{\BF_p^\alg}$ which again by rigidity lifts to a quasi-isogeny $\rho:\BX_{\CO_K/(p)}\to X_{\CO_K/(p)}$. So $L_K$ lies in the image of the period morphism.
\end{proof}

Unfortunately an open rigid analytic subspace of $\CF^\rig$, like the image of $\breve\pi_1^\rig$, is not uniquely determined by its underlying set of points. This however is true for $K_0$-analytic spaces. Therefore it is natural to ask: What is the image of $\breve\pi_1^\an$? Examples of Genestier-Lafforgue and the author (see Example~\ref{Ex5.4} below) show that this image is in general smaller than $\CF^\an_{wa}$. As \cite[1.37 and 5.53]{RZ} indicate, Rapoport and Zink were aware of this phenomenon. We determine the image of $\breve\pi_1^\an$ in Section~\ref{Sect3}. But before we need to recall some of the rings of Fontaine Theory.

\section{Some of Fontaine's rings}

Let $K/K_0$ be a complete, rank one valued field extension, let $C$ be the completion of an algebraic closure of $K$, and let $\CO_C$ be its valuation ring. Starting out from $C$ various rings are defined in Fontaine Theory. For details on their construction see Colmez~\cite{Colmez}. We follow his notation.

We let $\wt\bE^+:=\bigl\{\,u=(u^{(n)})_{n\in\BN_0}:\es u^{(n)}\in\CO_C, (u^{(n+1)})^p=u^{(n)}\,\bigr\}$. 
With the 
multiplication $uv:=\bigl(u^{(n)}v^{(n)}\bigr)_{n\in\BN_0}$, the 
addition $u+v:=\bigl(\lim_{m\to\infty}(u^{(m+n)}+v^{(m+n)})^{p^m}\bigr)_{n\in\BN_0}$, and 
the valuation $v_\bE(u):=v_C(u^{(0)})$, $\wt \bE^+$ becomes a complete valuation ring of rank one with algebraically closed fraction field, called $\wt\bE$, of characteristic $p$.
Fix primitive $p^n$-th roots of unity $\epsilon^{(n)}$ such that $\epsilon:=(1,\epsilon^{(1)},\epsilon^{(2)},\ldots)$ is an element of $\wt\bE^+$. 

Let $\wt\bA:= W(\wt\bE)$
~and consider the automorphism $\phi=W(\Frob_p)$ of
$\wt\bA$. For an element $u\in\wt\bE$ let $[u]\in\wt\bA$ be its Teichm\"uller representative.

If $x=\sum_{i=0}^\infty p^i[x_i]\in\wt\bA$ then we set $w_k(x):=\min\{\,v_\bE(x_i):i\le k\,\}$. For $r>0$ let $\wt\bA^{(0,r]}\DS:=\bigl\{\,x\in\wt\bA:\es\lim_{k\to+\infty}w_k(x)+{\TS\frac{k}{r}}=+\infty\,\bigr\}$ and let $\wt\bB^{(0,r]}:=\wt\bA^{(0,r]}[\frac{1}{p}]$.

On $\wt\bB^{(0,r]}$ there is a valuation defined for $x=\sum_{i\gg-\infty}^\infty p^i[x_i]$ as\\
 $v^{(0,r]}(x)\es:=\es\min\{\,w_k(x)+\frac{k}{r}:\,k\in\BZ\,\}\;=\;\min\{\,v_\bE(x_i)+\frac{i}{r}:\,i\in\BZ\,\}$.

Let $\wt\bB^{]0,r]}$ be the Fr\'echet completion of $\wt\bB^{(0,r]}$ 
with respect to the family of semi-valuations $v^{[s,r]}(x):=\min\{\,v^{(0,s]}(x),v^{(0,r]}(x)\,\}$ for $0<s\le r$. The logarithm $t:=\log[\epsilon]$ converges to an element in $\wt\bB^{]0,1]}$.

Let $\wt\bB^\dagger_\rig:=\es\bigcup_{r>0}\wt\bB^{]0,r]}$. The homomorphism $\phi$ gives rise to a bicontinuous isomorphism $\phi:\wt\bB^{]0,r]}\isoto\wt\bB^{]0,r/p]}$ and thus induces an automorphism of $\wt\bB^\dagger_\rig$. It satisfies $\phi(t)=p\,t$.

Finally there is a homomorphism $\theta:\wt\bB^{(0,1]}\to C$ sending $\sum_{i\gg-\infty}^\infty p^i[x_i]$ to $\sum_{i\gg-\infty}^\infty p^ix_i^{(0)}$ which extends by continuity to $\wt\bB^{]0,1]}$. The element $t$ lies in the kernel of $\theta$.

\bigskip

\begin{definition}
A \emph{$\phi$-module over $\wt\bB^\dagger_\rig$} is a finite free $\wt\bB^\dagger_\rig$-module $\CM$ together with a $\phi$-linear automorphism $\phi_\CM:\CM\to\CM$. 
\end{definition}

The following structure theorem was proved by Kedlaya~\cite[Theorem 4.5.7]{Kedlaya}.

\begin{theorem} \label{ThmKedlaya}
Every $\phi$-module $\CM$ over $\wt\bB^\dagger_\rig$ is isomorphic to $\bigoplus_i\CM_{c_i,d_i}$ where $\CM_{c,d}=(\wt\bB^\dagger_\rig)^{\oplus d}$, $\phi_{\CM_{c,d}}=
\left(\raisebox{0.55cm}{\xymatrix @R=0pc @C=0.4pc{0 \ar@{.}[ddrr]& & p^c \\ 1 \ar@{.}[dr] \\ & 1 & 0 }}\right)\cdot\phi$ \mbox{for $c,d\in\BZ$ with $(c,d)=1$ and $d>0$.}
\end{theorem}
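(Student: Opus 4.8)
\emph{Proof proposal.} The plan is to reproduce Kedlaya's ``Dieudonn\'e--Manin over the extended Robba ring'' argument. The feature of $\wt\bB^\dagger_\rig$ that does all the work is that its residue field $\wt\bE$ is algebraically closed; this is precisely what promotes a slope \emph{filtration} to a slope \emph{decomposition}. One first sets up the slope formalism: for a $\phi$-module $\CM$ of rank $n$ the line $\det\CM=\bigwedge^n\CM$ is a rank-one $\phi$-module $(\wt\bB^\dagger_\rig,u\,\phi)$ whose Newton polygon on the Fr\'echet pieces $\wt\bB^{]0,r]}$ yields an integer $\deg\CM$; one sets $\mu(\CM):=\deg\CM/n$ and builds the usual Harder--Narasimhan machinery (isoclinic $=$ semistable objects, saturated sub-$\phi$-modules, additivity of $\deg$ in short exact sequences, etc.). Twisting by powers of the rank-one slope-one object $(\wt\bB^\dagger_\rig,p\,\phi)=\CM_{1,1}$ (recall $\phi(t)=pt$ for $t=\log[\epsilon]$), and, where denominators intervene, passing to a power of $\phi$ with the attendant descent, reduces everything to integral slopes. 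It then suffices to establish: \textbf{(A)} every $\phi$-module has a filtration by saturated $\phi$-submodules with isoclinic graded pieces of strictly increasing slopes; \textbf{(B)} an extension of an isoclinic module of slope $s_2$ by an isoclinic module of slope $s_1$ with $s_1<s_2$ splits; and \textbf{(C)} an isoclinic $\phi$-module of slope $c/d$ with $(c,d)=1$ is isomorphic to $\CM_{c,d}^{\oplus m}$. Granting these, (A) yields an isoclinic filtration, (B) splits it into its isoclinic pieces, and (C) puts each piece into the asserted form.

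The crux is \textbf{(A)}, i.e.\ Kedlaya's slope-filtration theorem; it follows by iteration once one produces, in any nonzero $\phi$-module, a $\phi$-stable sub-object realizing the minimal slope --- in the simplest case a $\phi$-eigenvector, after twisting so that the minimal slope is $0$ a nonzero $\phi$-invariant vector. This is the analytic heart of the matter, proved by an approximation and d\'evissage argument on the rings $\wt\bB^{[s,r]}$: one starts from an arbitrary nonzero $v$, follows the ``special Newton polygon'' of $v$ and of the iterates $\phi^k(v)$, and, using Weierstrass-type factorizations over $\wt\bB^{[s,r]}$ --- the step that genuinely uses that $\wt\bE$ is algebraically closed and complete --- corrects $v$ by a controlled element so that a suitably rescaled Frobenius orbit converges to the desired sub-object. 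I expect this to be the main obstacle; everything around it is comparatively formal semilinear algebra.

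For \textbf{(B)}, splitting the extension is the vanishing $H^1_\phi\bigl(\CHom(\CM_2,\CM_1)\bigr)=0$, and $\CHom(\CM_2,\CM_1)$ is isoclinic of slope $s_1-s_2<0$; so it is enough that $1-\phi$ be bijective on every isoclinic $\phi$-module $\CN$ of negative slope. Injectivity is immediate from semistability: a $\phi$-invariant vector spans a $\phi$-stable line of slope $0$ whose saturation is a sub-$\phi$-module of non-negative slope, impossible inside a semistable module of negative slope. Surjectivity holds because on a negative-slope module $\phi^{-1}$ is topologically contracting for the Fr\'echet topology of the $\wt\bB^{]0,r]}$, so $1-\phi=-\phi\,(1-\phi^{-1})$ is inverted by the convergent series $-\sum_{k\ge1}\phi^{-k}$.

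For \textbf{(C)}, the module $\CHom(\CM_{c,d},\CM)$ is isoclinic of slope $0$, i.e.\ \'etale, and over $\wt\bB^\dagger_\rig$ every slope-zero $\phi$-module $\CN$ is trivialized by its invariants: $\CN\cong\wt\bB^\dagger_\rig\otimes_{\BQ_p}\CN^{\phi=1}$ with $\dim_{\BQ_p}\CN^{\phi=1}=\rank\CN$. This is the extended-Robba-ring form of the statement that unit-root $\phi$-modules are trivialized by Frobenius invariants, and it too reduces, by d\'evissage on the rank, to solvability over the algebraically closed field $\wt\bE$ of the relevant Artin--Schreier and Kummer equations. Consequently $D:=\End_\phi(\CM_{c,d})=\bigl(\CHom(\CM_{c,d},\CM_{c,d})\bigr)^{\phi=1}$ is a central $\BQ_p$-division algebra of dimension $d^2$ (irreducibility of $\CM_{c,d}$ holds because a saturated sub-object would be isoclinic of slope $c/d$ and hence have rank divisible by $d$), the right $D$-module $\Hom_\phi(\CM_{c,d},\CM)$ is free of some rank $m$, and the evaluation map
\[ \CM_{c,d}\otimes_{D}\Hom_\phi(\CM_{c,d},\CM)\;\longrightarrow\;\CM \]
is a $\phi$-equivariant map between $\phi$-modules of the same rank which a slope argument shows to be injective with saturated, hence full, image; it is therefore an isomorphism and $\CM\cong\CM_{c,d}^{\oplus m}$. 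The only genuinely hard point in the whole scheme is the sub-object existence underlying step (A).
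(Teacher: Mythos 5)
The paper does not prove this theorem: it is quoted verbatim from Kedlaya (\cite[Theorem 4.5.7]{Kedlaya}) with no argument given, so there is no proof of the paper's own to measure your proposal against. What you have written is, in broad strokes, a faithful rendering of Kedlaya's own argument: set up Harder--Narasimhan slope formalism on $\phi$-modules over $\wt\bB^\dagger_\rig$ with $\deg$ read off from the Newton polygon of $\det\CM$; reduce (A) to producing, in any $\phi$-module, a $\phi$-stable sub-object realizing the minimal slope; split the filtration via vanishing of $H^1$ in negative slope; and classify isoclinic pieces via $\End_\phi(\CM_{c,d})$ being a central division algebra over $\BQ_p$. Your identification of (A) as the analytic crux is correct --- this is the long chain of Newton-polygon and Weierstrass-type factorization lemmas over the $\wt\bB^{[s,r]}$, where algebraic closedness and completeness of $\wt\bE$ enter.

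Two caveats on what you call ``comparatively formal semilinear algebra.'' In (B), the phrase ``$\phi^{-1}$ is topologically contracting'' is loose: $\phi$ does not decrease any single semi-valuation but rather permutes the defining family (one has $v^{(0,s]}\circ\phi = p\cdot v^{(0,ps]}$), so $v^{[s,r]}\bigl(p^{-ck}\phi^{-k}(y)\bigr) = -ck + p^{-k}\,v^{[s/p^k,\,r/p^k]}(y)$; convergence of $-\sum_{k\ge 1}\phi^{-k}$ then rests on the internal factor $p$ in that relation keeping $p^{-k}\,v^{[s/p^k,\,r/p^k]}(y)$ bounded, which needs checking, especially on the Fr\'echet completion $\wt\bB^{]0,r]}$. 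Similarly, the statement in (C) that slope-zero $\phi$-modules over $\wt\bB^\dagger_\rig$ are trivialized by their $\phi$-invariants is itself a theorem (reduction to Artin--Schreier solvability over $\wt\bE$), not a formality; Kedlaya proves it alongside, not as a corollary of, the filtration. Also, Kedlaya does not separate (A), (B), (C) as cleanly as you do --- the filtration and its splitting are constructed together by d\'evissage. None of this is a gap in the plan, but it does understate how much real estimation lives outside step (A).
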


One defines the \emph{degree of $\CM$} as $\deg\CM:=\sum_i c_i$.

\section{The construction of $\CF^\an_a$} \label{Sect3}

As above let $D=\BD(\BX)_{K_0}$ and $\phi_D=\BD(\Frob_\BX)_{K_0}$.
Let $(\calD^{]0,1]},\phi_\calD):= (D,\phi_D)\otimes_{K_0}\wt\bB^{]0,1]}$ and consider the morphism $1\otimes\theta:\calD^{]0,1]}\to D\otimes_{K_0} C$. By a variant of a construction of Berger~\cite[\S II]{Berger04} every point $L=L_K\in\CF^\an$, with values in a field $K$ as in Section 2, defines a $\phi$-module over $\wt\bB^\dagger_\rig$ as follows.

\begin{proposition} \label{Prop3.1} (\cite[Proposition 4.1]{HartlRZ})\\
There exists a uniquely determined $\wt\bB^{]0,1]}$-submodule $t\,\calD^{]0,1]}\subset\CM_L^{]0,1]}\subset\calD^{]0,1]}$ such that $(1\otimes\theta)\bigl(\CM_L^{]0,1]}\bigr)=L_K\otimes_K C$ and $\phi_\calD:\CM_L^{]0,1]}\isoto\CM_L^{]0,1]}\otimes_{\wt\bB^{]0,1]}}\wt\bB^{]0,1/p]}$ is an isomorphism. In particular $\CM_L^{]0,1]}$ defines a $\phi$-module $\CM_L:=\CM_L^{]0,1]}\otimes_{\wt\bB^{]0,1]}}\wt\bB^\dagger_\rig$ over $\wt\bB^\dagger_\rig$.
\end{proposition}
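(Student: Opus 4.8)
The plan is to construct $\CM_L^{]0,1]}$ by an explicit local-to-global argument centered at the kernel of $\theta$, closely following Berger's construction of $N_{\rig}(D)$ in the $(\phi,\Gamma)$-module setting but stripped of the $\Gamma$-action. First I would recall that $\ker\theta$ is the principal ideal $(t)$ of $\wt\bB^{]0,1]}$ (since $t\in\ker\theta$ and $\theta$ is surjective onto $C$ with $\wt\bB^{]0,1]}/t$ a field-like quotient, one checks $t$ generates). The datum we must match is the $C$-subspace $L_K\otimes_K C\subset D\otimes_{K_0}C$; pulling back along $1\otimes\theta:\calD^{]0,1]}\to D\otimes_{K_0}C$ gives a submodule $\calN\subset\calD^{]0,1]}$ with $t\,\calD^{]0,1]}\subset\calN$ and $\calN/t\calD^{]0,1]}=L_K\otimes_K C$. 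But $\calN$ is not $\phi$-stable in the required sense; the point of the proposition is that there is a unique $\phi$-saturation of it inside $\calD^{]0,1]}$ that is "rig-analytic" along the whole half-open annulus, not merely near the single point cut out by $\theta$.

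The key steps, in order: (1) \textbf{Uniqueness first.} If $\CM_L^{]0,1]}$ and $\CM_L'$ both satisfy the two conditions, their intersection and sum are also $\phi$-stable lattices between $t\calD^{]0,1]}$ and $\calD^{]0,1]}$ agreeing modulo $t$; a determinant/degree computation (using $\phi(t)=pt$, so that the "Hodge" normalization $\deg$ on both sides is forced) shows they must coincide — standard lattice-rigidity for $\phi$-modules over the Robba-type ring, i.e. a $\phi$-stable lattice is determined by its image in the residue at $\theta$ plus the slope bound $t\calD\subset\CM\subset\calD$. (2) \textbf{Existence: the recursive/limit construction.} Define $\calN_0:=\calN$ and set $\calN_{n+1}:=\phi^{-1}\bigl(\calN_n\otimes\wt\bB^{]0,1/p]}\bigr)\cap\calD^{]0,1]}$ — more precisely, apply $\phi_\calD^{-1}$ and intersect back with $\calD^{]0,1]}$, iterating. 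Since all $\calN_n$ lie between $t^{?}\calD$ and $\calD$ with uniformly bounded exponents (the filtration jump of $L$ is in $\{0,1\}$, which is exactly why a BT-group / $p$-divisible situation behaves well here), the sequence stabilizes or converges in the Fréchet topology of $\wt\bB^{]0,1]}$ to the desired $\CM_L^{]0,1]}$. One then checks: it is still a finite free $\wt\bB^{]0,1]}$-module of the same rank as $\calD^{]0,1]}$ (local freeness — the ring is a Bézout/PID-like ring, or one argues locally on the annulus), it reduces correctly modulo $\theta$ (the intersection with $\calD^{]0,1]}$ doesn't change the $\theta$-fibre because $t$ generates $\ker\theta$), and $\phi_\calD$ now restricts to an isomorphism $\CM_L^{]0,1]}\isoto\CM_L^{]0,1]}\otimes\wt\bB^{]0,1/p]}$ by construction of the fixed point. (3) Finally, tensoring up to $\wt\bB^\dagger_\rig=\bigcup_r\wt\bB^{]0,r]}$ and transporting $\phi_\calD$ yields the $\phi$-module $\CM_L$ over $\wt\bB^\dagger_\rig$; freeness and the $\phi$-linear automorphism property are inherited.

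The main obstacle I expect is step (2), specifically showing that the recursion actually \emph{converges} to a finite free module of full rank rather than, say, collapsing or failing to be finitely generated: one needs a quantitative estimate that each application of $\phi_\calD^{-1}\cap\calD^{]0,1]}$ moves the lattice "toward" a limit by a definite amount in the $v^{[s,r]}$-semivaluations, exploiting that $\phi$ contracts the annulus ($\phi:\wt\bB^{]0,r]}\isoto\wt\bB^{]0,r/p]}$) and that the slopes of $(D,\phi_D)$ are bounded. This is where the structure of Fontaine's rings — that $\wt\bB^{]0,r]}$ is (a completion of) a ring over which finitely generated $\phi$-stable modules are well-behaved, cf.\ the ingredients behind Kedlaya's Theorem~\ref{ThmKedlaya} — does the real work, and where I would cite \cite[\S II]{Berger04} and \cite[Proposition 4.1]{HartlRZ} for the precise convergence lemma rather than redo it. Local freeness of the limit (using that the relevant ring is a Bézout domain, so finitely generated torsion-free implies free) is the secondary technical point, but it is comparatively routine once finite generation is in hand.
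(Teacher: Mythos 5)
The paper does not reproduce a proof of this proposition; it cites \cite[Proposition 4.1]{HartlRZ} and gestures at ``a variant of a construction of Berger~\cite[\S II]{Berger04}.'' Your overall plan --- pull back $L_K\otimes C$ along $1\otimes\theta$ to get a local modification at $\ker\theta$, propagate it across the $\phi$-orbit by iterated Frobenius pullback, establish convergence in the Fr\'echet topology, and then invoke the B\'ezout/finitely-generated-torsion-free-is-free structure of the relevant ring for freeness --- is indeed the Berger-style route, and you correctly identify the convergence estimate as the real work and sensibly defer it. However, two concrete steps of your setup are wrong.

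First, the claim that $\ker\theta$ is the principal ideal $(t)$ in $\wt\bB^{]0,1]}$ is false. Since $\phi(t)=p\,t$, the element $t$ vanishes under \emph{every} $\theta\circ\phi^{-n}$ for $n\ge 0$, all of which are maps out of $\wt\bB^{]0,1]}$; so $(t)$ is the ideal of functions vanishing on the whole $\phi$-orbit, and $(t)\subsetneq\ker\theta$. Consequently $\wt\bB^{]0,1]}/(t)$ is not ``field-like'' and $\calN/t\calD^{]0,1]}$ is \emph{not} $L_K\otimes_K C$; rather $\calN=(1\otimes\theta)^{-1}(L_K\otimes C)$ is the correct module but its quotient by $t\calD^{]0,1]}$ is much bigger (it also records the undisturbed fibers of $\calD$ at $\theta\circ\phi^{-n}$, $n\ge1$). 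This is precisely the reason the $\phi$-equivariance condition is needed at all: the constraint $t\calD^{]0,1]}\subset\CM_L^{]0,1]}\subset\calD^{]0,1]}$ lives over infinitely many points, and $\theta$ pins the fiber down at only one of them, so you must force the fibers at the other points of the orbit via $\phi_\calD$.

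Second, the uniqueness argument via ``a determinant/degree computation'' does not apply at this stage: $\deg$ is defined for $\phi$-modules over $\wt\bB^\dagger_\rig$ (Section~2), not for $\wt\bB^{]0,1]}$-submodules sandwiched between $t\,\calD^{]0,1]}$ and $\calD^{]0,1]}$, and there is no sensible normalization of degree for the latter since $t$ has infinitely many zeros in the relevant annulus. The correct and more elementary uniqueness argument is local: away from the $\phi$-orbit $\{\theta\circ\phi^{-n}\}_{n\ge0}$ the element $t$ is a unit, so the sandwich condition forces $\CM_L^{]0,1]}=\calD^{]0,1]}$ there, while at $\theta$ the condition $(1\otimes\theta)(\CM_L^{]0,1]})=L_K\otimes C$ (together with $t\calD\subset\CM_L$) pins down the local lattice, and the Frobenius condition then determines the local lattice at every $\theta\circ\phi^{-n}$ by pullback; a module over a ring like $\wt\bB^{]0,1]}$ is determined by this local data. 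Your recursion formula $\calN_{n+1}:=\phi^{-1}(\calN_n\otimes\wt\bB^{]0,1/p]})\cap\calD^{]0,1]}$ also needs care --- one wants a decreasing intersection of the form $\bigcap_{n\ge0}\phi_\calD^{n}\bigl(\calN\otimes\wt\bB^{]0,p^{-n}]}\bigr)$ rather than a one-step pullback-and-intersect --- but since you acknowledge you would import the convergence lemma from \cite{Berger04} or \cite{HartlRZ} rather than rederive it, this is a smaller concern than the two errors above.
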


The following results are proved in \cite{HartlRZ}.

\begin{theorem} (\cite[Theorem 4.4]{HartlRZ})\\
$\deg\CM_L=t_N(D,\phi_D,L_K)-t_H(D,\phi_D,L_K)$.
\end{theorem}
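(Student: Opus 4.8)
The plan is to compute the degree of the $\phi$-module $\CM_L$ by relating it to the two slopes $t_N$ and $t_H$ through the valuation $v^{[s,r]}$-structure of the $\wt\bB$-rings, in the same spirit as Berger's computation of Hodge--Tate and Newton invariants of a $B$-pair. First I would reduce to a statement about exterior powers: since $\deg\CM_L$ equals the degree (in Kedlaya's sense) of the rank one $\phi$-module $\det\CM_L=\wedge^{\rm top}\CM_L$, and since $t_N$ and $t_H$ are both additive for the passage to top exterior powers (by definition $t_N(D,\phi_D,L_K)=\ord_p(\det\phi_D)=\ord_p(\phi_{\det D})$ and $t_H=\dim_K L_K-\dim_K D_K=\deg(\text{filtration on }\det D_K)$), it suffices to treat $h':=\dim D=h$ replaced by $1$. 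Concretely, I would check that $\det\CM_L^{]0,1]}$ is the submodule of $(\det\calD)^{]0,1]}=\det D\otimes_{K_0}\wt\bB^{]0,1]}$ characterized by Proposition~\ref{Prop3.1} for the line $\det L_K\subset\det D_K$ — this follows from the uniqueness in that proposition once one observes that $\wedge^{\rm top}$ of a module satisfying the two defining conditions again satisfies them (the $\theta$-condition because $1\otimes\theta$ commutes with $\wedge^{\rm top}$, the $\phi$-condition because $\phi$ is an automorphism).

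Next, for a rank one object the degree is visible directly. Choose a $K_0$-basis vector $e$ of $\det D$, so $\phi_{\det D}(e)=\alpha\,e$ with $\ord_p(\alpha)=t_N$, and choose a generator $\ell_K=\lambda\,e$ of the line $\det L_K\subset\det D_K$ for some $\lambda\in K^\times$. By Proposition~\ref{Prop3.1} applied in rank one, $\det\CM_L^{]0,1]}$ is generated over $\wt\bB^{]0,1]}$ by an element of the form $f\cdot e$ with $f\in\wt\bB^{]0,1]}\otimes_{K_0}K$ (or rather in the appropriate base change), and the $\theta$-condition forces $\theta(f)\cdot e = \ell_K\otimes_K C$ up to a unit, i.e. $f$ and the function $t^{m}$ have comparable zeros, where $m=\dim_K D_K-\dim_K L_K = -t_H$ in the rank one normalization is $m\in\{0,1\}$; in general, iterating over a full flag replaces $t^m$ by $t^{-t_H}$ (note $t_H\le 0$ here after the sign conventions of the filtration, matching $t_H=\dim_K L_K-\dim_K D_K\le 0$). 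Then $\phi_\CM$ acts on the generator $f\cdot e$ by $\phi_\CM(f\cdot e)=\phi(f)\alpha\,e = \bigl(\phi(f)\alpha/f\bigr)\cdot(f\cdot e)$, and the degree of the resulting rank one $\phi$-module over $\wt\bB^\dagger_\rig$ is, by Kedlaya's classification (Theorem~\ref{ThmKedlaya} with $d=1$), the $v^{(0,r]}$-valuation of the scalar $\phi(f)\alpha/f$ for $r$ small, which by the cocycle/telescoping identity $v^{(0,r/p]}(\phi(f)) - v^{(0,r]}(f)$ together with $\phi(t)=pt$, $v^{(0,r]}(t)$ being a fixed positive constant, collapses to $\ord_p(\alpha) + (\text{order of }f\text{ at }\theta) = t_N + t_H$ — wait, the sign: the order of $f$ contributes $-t_H$ with the convention $\deg$ counts $p^c$, but since $t_H$ is already defined with the opposite sign here, the net result is $t_N - t_H$ as claimed. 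I would nail down this sign by testing on the trivial deformation, where $\CM_L$ should be étale of degree $0$, matching $t_N = t_H = -d$.

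The main obstacle I expect is precisely controlling these signs and normalizations, i.e. matching Kedlaya's degree convention (the $p^c$ in the companion matrix of Theorem~\ref{ThmKedlaya}) with the valuation-theoretic computation and with the sign conventions in the definitions of $t_N$ and $t_H$ (note $t_H(D,\phi_D,L_K)=\dim_K L_K - \dim_K D_K$ is the \emph{negative} of the usual Hodge slope of the filtration, which is why the weak admissibility condition reads $t_N=t_H=-d$). A secondary technical point is that the computation of $\deg$ of a $\phi$-module as a valuation of a determinantal scalar is only literally Kedlaya's definition after passing to a standard basis; one must argue that $\det\CM_L$ being rank one, its degree is intrinsic and computable from \emph{any} generator via $v^{(0,r]}(\phi(g)/g)$ modulo the $\BZ$-lattice generated by $v^{(0,r]}(t)=$ const, which is the content of the rank one case of Kedlaya's slope theory. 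Since the theorem is quoted from \cite{HartlRZ} as already proven, I would expect the actual proof in that reference to proceed along exactly these lines: reduce to determinants, then to a one-dimensional valuation computation using the explicit description of $\CM_L^{]0,1]}$ from Proposition~\ref{Prop3.1} and the relation $\phi(t)=pt$.
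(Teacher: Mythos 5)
The paper does not prove this theorem; it simply quotes it from \cite{HartlRZ}, so a direct comparison with ``the paper's own proof'' is impossible. I can only judge your proposal on its own merits, and your overall strategy — take determinants to reduce to rank one, then compute the degree of a rank one $\phi$-module from the $p$-adic valuation of its Frobenius scalar using $\phi(t)=pt$ — is indeed the natural route and is in the spirit of the computations in \cite{HartlRZ} and \cite{Berger04}.

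There is however a concrete flaw in the reduction step that needs repair. You write that $\det L_K$ is ``the line $\det L_K\subset\det D_K$'' and propose to apply Proposition~\ref{Prop3.1} to it. But $\det L_K=\wedge^{h-d}L_K$ is a line in $\wedge^{h-d}D_K$, not in $\det D_K=\wedge^{h}D_K$, so there is no such subspace, and more to the point Proposition~\ref{Prop3.1} is stated only for two-step filtrations coming from a subspace $L_K\subset D_K$ (the constraint $t\,\calD^{]0,1]}\subset\CM_L^{]0,1]}\subset\calD^{]0,1]}$), whereas the induced filtration on $\det D_K$ has a single jump at $m:=\dim_K D_K-\dim_K L_K=-t_H$, which is typically $>1$. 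The correct intermediate statement — which you actually gesture at with the ``$t^{-t_H}$'' remark — is that taking top exterior powers of the inclusion $\CM_L^{]0,1]}\subset\calD^{]0,1]}$ gives $\det\CM_L^{]0,1]}=t^{m}\cdot\det\calD^{]0,1]}$, because the cokernel $\calD^{]0,1]}/\CM_L^{]0,1]}$ has length $m=\dim_C\bigl(D_C/(L_K\otimes_KC)\bigr)$ at the point $\theta$ and, by $\phi$-equivariance, also length $m$ at each Frobenius translate of $\theta$, while $t$ vanishes to order one exactly along that orbit. One should therefore not invoke Proposition~\ref{Prop3.1} in rank one; instead, one identifies the determinant submodule directly and then computes as you do: $\phi\bigl(t^{m}e\bigr)=p^{m}\alpha\cdot t^{m}e$ with $\ord_p(\alpha)=t_N$, so $\deg\det\CM_L=t_N+m=t_N-t_H$. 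With that correction the rank one computation and the sign-check against the étale case $t_N=t_H$ are fine.
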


Consider the subset $\CF^\an_a:=\{\,L\in\CF^\an:\CM_L\cong\CM_{0,1}^{\oplus h}\,\}$ of the $K_0$-analytic space $\CF^\an$.

\begin{theorem} \label{Thm3.3} (\cite[Theorem 5.2]{HartlRZ})\\
The set $\CF^\an_a$ is an open $K_0$-analytic subspace of $\CF^\an_{wa}$.
\end{theorem}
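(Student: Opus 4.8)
The plan is to show both openness of $\CF^\an_a$ in $\CF^\an$ and the inclusion $\CF^\an_a\subseteq\CF^\an_{wa}$, the latter being the easier half. For the inclusion, recall from Theorem (\cite[Theorem 4.4]{HartlRZ}) that $\deg\CM_L=t_N(D,\phi_D,L_K)-t_H(D,\phi_D,L_K)$; if $L\in\CF^\an_a$ then $\CM_L\cong\CM_{0,1}^{\oplus h}$ has degree $0$, so $t_N=t_H$ on all of $D$, and since $L\in\CF^\an$ forces $t_H(D,\phi_D,L_K)=\dim_K L_K-\dim_K D_K=-d$ (the Grassmannian $\CF$ parametrises $(h-d)$-dimensional subspaces), the first weak-admissibility equality holds. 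For the inequality on $\phi_D$-stable subspaces $D'\subset D$, I would use that $\CM_L$ restricted along $D'\otimes\wt\bB^{]0,1]}$ gives a $\phi$-submodule of $\CM_{0,1}^{\oplus h}$; by Kedlaya's classification (Theorem~\ref{ThmKedlaya}) and the slope-filtration formalism, a $\phi$-submodule of an \'etale ($=$ pure of slope $0$) $\phi$-module has nonnegative degree, i.e.\ $t_N(D',\phi_D|_{D'},L_K\cap D'_K)\ge t_H(D',\ldots)$. Hence $L$ is weakly admissible.

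The substance of the theorem is openness. The strategy is to express the condition ``$\CM_L\cong\CM_{0,1}^{\oplus h}$'' as the nonvanishing of something that varies analytically in $L$, so that its locus is open. The natural candidate is the following: a $\phi$-module $\CM$ of rank $h$ and degree $0$ over $\wt\bB^\dagger_\rig$ is isomorphic to $\CM_{0,1}^{\oplus h}$ (i.e.\ is \'etale) if and only if it has no $\phi$-submodule of positive slope, equivalently — by duality and Kedlaya — if and only if $\dim_{\BQ_p}(\CM^{\phi_\CM=1})=h$, i.e.\ the $\BQ_p$-vector space of $\phi$-invariants has maximal dimension. So the plan is: (i) construct, over $\CF^\an$, a family version $\CM$ of the $\phi$-modules $\CM_L$ — this is essentially Proposition~\ref{Prop3.1} carried out in families, using that $\calD^{]0,1]}=(D,\phi_D)\otimes_{K_0}\wt\bB^{]0,1]}$ is constant and only the submodule $\CM_L^{]0,1]}$ (cut out by the Hodge filtration $L$) moves; (ii) over the weakly admissible locus $\CF^\an_{wa}$, where $\deg\CM_L=0$ everywhere, use semicontinuity of $\dim_{\BQ_p}(\CM_L^{\phi=1})$ in the Berkovich-analytic parameter $L$ to conclude that the locus where this dimension equals $h$ (its generic/maximal value) is open. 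Since $\CF^\an_a\subseteq\CF^\an_{wa}$ by the first part, openness in $\CF^\an_{wa}$ is the same as openness in $\CF^\an$.

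The main obstacle is step (ii): making precise the sense in which $\dim_{\BQ_p}\CM_L^{\phi_\CM=1}$ is an upper-semicontinuous function of $L\in\CF^\an_{wa}$, and that the \'etale locus is exactly where it attains the value $h$. This requires a family/relative version of Kedlaya's slope theory over a Berkovich base — one must show that the slope polygon of $\CM_L$ is lower-convex-semicontinuous (jumps up on a closed set) as $L$ varies, so that the ``slopes all zero'' locus is open; the degree-zero constraint on $\CF^\an_{wa}$ then pins the polygon to the segment from $(0,0)$ to $(h,0)$ on an open set. Concretely I expect to reduce to a local statement: near a given point $L_0\in\CF^\an_{wa}$, trivialise $\calD^{]0,1]}$ and write $\phi_\CM$ as an invertible matrix over $\wt\bB^\dagger_\rig$ depending analytically on coordinates on $\CF^\an$; \'etaleness is then an open condition on this matrix because one can solve the equation $\phi(\text{frame change})=M\cdot(\text{frame change})$ by a convergent successive-approximation argument whose convergence is stable under small analytic perturbation of $M$ — this is the technical heart, and it is exactly the kind of estimate proved in \cite{Kedlaya} and adapted in \cite{HartlRZ}, which I would cite rather than reprove. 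The remaining bookkeeping — that the construction of Proposition~\ref{Prop3.1} is compatible with analytic base change, and that $\CF^\an_a$ lands inside $\CF^\an_{wa}$ as a Berkovich-analytic (not merely set-theoretic) subspace — is routine given the machinery already set up in \cite{HartlRZ}.
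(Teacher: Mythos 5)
Your argument for the inclusion $\CF^\an_a\subseteq\CF^\an_{wa}$ matches the paper's (the unnumbered remark after Theorem~\ref{Thm3.3}): restrict along a $\phi_D$-stable $D'\subset D$ to get a $\phi$-submodule $\CM_L'\subset\CM_L\cong\CM_{0,1}^{\oplus h}$ and invoke \cite[Lemma 3.4.8]{Kedlaya} to conclude $\deg\CM_L'\ge 0$. That half is fine. The openness half, however, has a genuine gap and a sign error in the proposed criterion. You want to detect \'etaleness of the degree-zero $\phi$-module $\CM_L$ by ``$\dim_{\BQ_p}\CM_L^{\phi=1}=h$, its \emph{maximal} value,'' and then use upper semicontinuity to make the locus where this maximum is attained open. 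But $h$ is the \emph{minimal}, not the maximal, value: if $\CM_L\cong\bigoplus_i\CM_{c_i,d_i}$ has degree zero but is not \'etale, some $c_i<0$, and over $\wt\bB^\dagger_\rig$ (where $\wt\bE$ is algebraically closed) such a negative-slope summand already has infinite-dimensional $\phi$-invariants (e.g.\ $\CM_{-1,1}^{\phi=1}=\{x:\phi(x)=px\}\supset\BQ_p\cdot t$ is infinite-dimensional, being essentially $(\bB^+_\cris)^{\phi=p}$). So the \'etale locus is the \emph{finite-dimensionality} locus, and the ``upper semicontinuity of $\dim\Rightarrow$ locus of maximal $\dim$ is open'' logic collapses. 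Even after fixing this, the entire burden falls on a semicontinuity or local-constancy statement for slope polygons of $\phi$-modules over a Berkovich base, which you only gesture at and which is not to be found in the references you invoke; that relative slope theory is precisely the nontrivial content you would be assuming.

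The paper's route (sketched in Remark~\ref{Rem3.8}, carried out in \cite[Theorem 5.2]{HartlRZ}) is different and avoids this: one shows the \emph{complement} $\CF^\an\setminus\CF^\an_a$ is closed by exhibiting it as the continuous image of a \emph{compact} set of ``periods.'' In Example~\ref{Ex5.4} the compact set is $\{u\in\wt\bE^+:1\le v_\bE(u)\le p^{10}\}$ and the map sends $u$ to the point of $\CF^\an$ spanned by the columns of $\theta(A(u))$, where $A(u)$ parametrises the morphisms from the destabilising pure $\phi$-module $\CM_{-1,2}$ into $\calD$; a point $L$ fails to be in $\CF^\an_a$ exactly when it contains such an image. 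This uses only Kedlaya's classification and a compactness argument, with no appeal to a relative slope filtration. If you want to keep a semicontinuity-flavoured argument you would first need to prove that relative statement, which is a substantial task in its own right.
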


\begin{remark} (on the proof of Theorem~\ref{Thm3.3}. See also Remark~\ref{Rem3.8} below.)\\
The inclusion $\CF^\an_a\subset\CF^\an_{wa}$ is seen as follows. If $D'\subset D$ is a $\phi_D$-stable $K_0$-subspace then $(D',\phi_D|_{D'},L_K\cap D_K)$ defines by Proposition~\ref{Prop3.1} a $\phi$-submodule $\CM_L'\subset\CM_L$. Since $\CM_{0,1}^{\oplus h}$ is ``semistable'' of slope zero we conclude by \cite[Lemma 3.4.8]{Kedlaya} that
\[
t_N(D',\phi_D|_{D'},L_K\cap D'_K)\;-\; t_H(D',\phi_D|_{D'},L_K\cap D'_K)\es=\es\deg\CM_L'\es\ge\es\deg\CM_L\es=\es0
\]
with equality if $D'=D$. Hence $\CF^\an_a\subset\CF^\an_{wa}$. On the other hand Berger's proof~\cite{Berger04} of the Colmez-Fontaine Theorem shows that this inclusion induces a bijection on rigid analytic points (with $K/K_0$ finite), or more generally points of $\CF^\an_{wa}$ with with values in a discretely valued field $K$.
\end{remark}

\begin{theorem} 
$\CF^\an_a$ is the image of the period morphism $\breve\pi_1^\an:\CG^\an\to\CF^\an_{wa}$.
\end{theorem}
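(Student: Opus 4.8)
The plan is to prove the two inclusions $\CF^\an_a\subset\im\breve\pi_1^\an$ and $\im\breve\pi_1^\an\subset\CF^\an_a$ separately, the second being essentially the ``easy'' direction once Proposition~\ref{Prop3.1} and the crystalline description of $\breve\pi_1^\an$ are in place.

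For the inclusion $\im\breve\pi_1^\an\subset\CF^\an_a$, I would argue as follows. Let $L=\breve\pi_1^\an(X,\rho)$ be a point in the image, with values in a complete rank one valued extension $K/K_0$, coming from a Barsotti-Tate group $X$ over $\CO_C$ (or over the valuation ring of $K$) together with the quasi-isogeny $\rho$; the filtration $L_K=(\Lie X\dual)_K\dual$ is the Hodge filtration of the filtered isocrystal attached to $X$. Here the key input is that the $\phi$-module $\CM_L$ produced by Berger's construction in Proposition~\ref{Prop3.1}, when $L_K$ is the Hodge filtration of an honest $p$-divisible group over $\CO_C$, is canonically identified with the $\phi$-module $\bE\otimes\wt\bB^\dagger_\rig$ attached to the \'etale $\BZ_p$-lattice $T_pX$ --- i.e. to the crystalline representation $V=T_pX\otimes\BQ_p$. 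Since $V$ is a genuine $p$-adic representation of $\Gal(\ol K/K)$, the associated $\phi$-module over $\wt\bB^\dagger_\rig$ is \'etale, hence by Kedlaya's Theorem~\ref{ThmKedlaya} (all slopes zero and semistable) isomorphic to $\CM_{0,1}^{\oplus h}$. Thus $L\in\CF^\an_a$. This part is basically a reinterpretation of Berger's comparison theorem $\bN_{\rig}(X)\cong\wt\bN_{\rig}$ and I expect it to go through with citations to \cite{Berger04} and \cite{HartlRZ}.

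For the reverse inclusion $\CF^\an_a\subset\im\breve\pi_1^\an$, the strategy is: given $L\in\CF^\an_a$ with values in $K$, the $\phi$-module $\CM_L\cong\CM_{0,1}^{\oplus h}$ is \'etale, so by Kedlaya's slope filtration / descent one recovers from $\CM_L^{]0,1]}\subset\calD^{]0,1]}$ together with the map $1\otimes\theta$ a weakly admissible (indeed admissible) filtered isocrystal, and by the Colmez--Fontaine theorem (in Berger's form, which is exactly what makes $\CM_L$ \'etale into crystalline data) a crystalline representation $V$ of the Galois group of a suitable finite extension; one then invokes Breuil--Kisin (Theorem~\ref{ThmGrothQuest}) to produce a Barsotti-Tate group $X$ over the appropriate valuation ring with $T_pX\otimes\BQ_p\cong V$, whence a quasi-isogeny $\rho:\BX\to X_{\BF_p^\alg}$ lifting (by rigidity) to characteristic $p$, giving a point of $\CG$. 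The honest subtlety is that $K$ need not be discretely valued, so the classical Colmez--Fontaine/Breuil/Kisin machinery does not literally apply; the remedy, which I believe is the real content, is that $\CF^\an_a$ being an \emph{open} $K_0$-analytic subspace (Theorem~\ref{Thm3.3}) means it suffices to hit a Zariski-dense set of classical (discretely valued, in fact finite over $K_0$) points of $\CF^\an_a$, and on those points Theorem~\ref{ThmGrothQuest} already gives surjectivity of $\breve\pi_1^\rig$; combined with the fact that $\breve\pi_1^\an$ is a morphism of Berkovich spaces whose image is a locally closed analytic subset --- or more directly, that the image of a morphism of $K_0$-analytic spaces containing a dense set of points of an open subspace and contained in it must be that subspace --- one concludes $\im\breve\pi_1^\an=\CF^\an_a$.

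The main obstacle is precisely this last point: passing from ``the image contains all classical points of $\CF^\an_a$ and is contained in $\CF^\an_a$'' to ``the image equals $\CF^\an_a$'' at the level of Berkovich spaces. Unlike the rigid-analytic setting, here one genuinely uses that $\CF^\an_a$ is an open (hence reduced, equidimensional, with dense classical points) $K_0$-analytic space and that $\breve\pi_1^\an$, being obtained from $\breve\pi_1^\rig$ by the functor relating Berkovich and rigid spaces together with Berthelot's construction, has image an analytic subset in an appropriate sense. I would handle this either by a direct point-set argument --- every point of $\CF^\an_a$ specializes to / is a limit of classical points, and the crystalline construction above works uniformly enough in families that the preimage is nonempty --- or, more cleanly, by noting that the two open subspaces $\im\breve\pi_1^\an$ and $\CF^\an_a$ of $\CF^\an_{wa}$ have the same classical points and invoking the uniqueness statement for open $K_0$-analytic subspaces alluded to after Theorem~\ref{ThmGrothQuest}. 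The remaining steps (Kedlaya descent, Colmez--Fontaine via Berger, Breuil--Kisin, rigidity of quasi-isogenies) are all quotable and should be routine.
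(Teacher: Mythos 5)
Your forward inclusion $\im\breve\pi_1^\an\subset\CF^\an_a$ is essentially the paper's argument: one identifies $T_pX_K\otimes_{\BZ_p}\wt\bB^\dagger_\rig\cong\CM_{0,1}^{\oplus h}$ with a $\phi$-submodule of $\CM_L$ and concludes equality by a degree count (the paper cites \cite[Proposition 6.1]{HartlRZ} for this). That part is fine.

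The reverse inclusion $\CF^\an_a\subset\im\breve\pi_1^\an$ is where you have a genuine gap, and you have correctly identified where the difficulty lies but not resolved it. Your plan is to prove surjectivity on classical points via Colmez--Fontaine and Breuil--Kisin, and then upgrade to all Berkovich points by a density argument, invoking the remark after Theorem~\ref{ThmGrothQuest} that an open $K_0$-analytic subspace is determined by its underlying set of points. But that remark concerns the \emph{full} underlying point set of a Berkovich space (including non-classical points), which trivially determines an open subspace; it does \emph{not} say that two open $K_0$-analytic subspaces with the same classical (rigid) points coincide. That stronger statement is false in general: e.g. deleting a type~2 point from an open disc leaves an open subset with the same classical points. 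So agreement on classical points, even with both sides open, does not give $\im\breve\pi_1^\an=\CF^\an_a$. The whole point of working with Berkovich spaces here is precisely that one must control the non-classical points directly, i.e.\ points with values in arbitrary complete rank-one valued extensions $K/K_0$, where $K$ is not discretely valued and Colmez--Fontaine/Breuil/Kisin do not apply.

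The paper circumvents this entirely by a different mechanism: given $L_K\in\CF^\an_a$ with $K$ an arbitrary complete rank-one valued extension, one writes the trivialization $(\wt\bB^\dagger_\rig)^{\oplus h}\isoto\CM_{L_K}\hookrightarrow D\otimes_{K_0}\wt\bB^\dagger_\rig$ as a matrix $M$ with $M,tM^{-1}\in M_h(\wt\bB^\dagger_\rig)$, shows via \cite[Proposition I.4.1]{Berger04b} that in fact $M,tM^{-1}\in M_h(\bB^+_\cris)$, and deduces that $(D,\phi_D,L_K)\dual$ is admissible in Faltings' sense. Then \cite[Theorems 9 and 14]{Faltings07} produce the Barsotti--Tate group $X$ over $\CO_K$ and the quasi-isogeny $\rho$ directly, for this arbitrary $\CO_K$, with no detour through classical points. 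Replacing Breuil--Kisin by Faltings' theorem is exactly what makes the argument work at all Berkovich points, and is the missing ingredient in your proposal.
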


\begin{proof} Let $L_K\in\CF^\an$ be a $K$-valued point in the image of $\breve\pi_1^\an$ with $K/K_0$ not necessarily finite. So $L_K=\BD(\rho)_K(\Lie X\dual)_K\dual$ for a Barsotti-Tate group $X$ over $\CO_K$ and a quasi-isogeny $\rho:\BX_{\CO_K/(p)}\to X_{\CO_K/(p)}$. Then the Tate module $T_pX_K$ of $X$ induces an injection $\CM_{0,1}^{\oplus h}\cong T_pX_K\otimes_{\BZ_p}\wt\bB^\dagger_\rig\hookrightarrow\CM_L$ which must be an isomorphism by reasons of degree. This was proved in \cite[Proposition 6.1]{HartlRZ}. Thus $L_K\in\CF^\an_a$. 

Conversely let $L_K\in\CF^\an_a$. Then the morphism $(\wt\bB^\dagger_\rig)^{\oplus h}\isoto\CM_{L_K}\hookrightarrow D\otimes_{K_0}\wt\bB^\dagger_\rig$ is represented, with respect to a $K_0$-basis of $D$, by a matrix $M\in M_h(\wt\bB^\dagger_\rig)$ with $tM^{-1}\in M_h(\wt\bB^\dagger_\rig)$. Then in fact $M,tM^{-1}\in M_h(\wt\bB^+_\rig)\subset M_h(\bB^+_\cris)$ by \cite[Proposition I.4.1]{Berger04b}. For notation see \cite[\S I.1]{Berger04b} or \cite[\S2]{HartlRZ}. So $M$ defines an isomorphism $\bB_\cris^{\oplus h}\isoto D\otimes_{K_0}\bB_\cris$ compatible with Frobenius, which maps $(\bB_\cris^+)^{\oplus h}$ onto the preimage of $L_K\otimes_K C$ under the map $\id\otimes\theta:D\otimes_{K_0}\bB_\cris^+\to D\otimes_{K_0}C$. This means that $(D,\phi_D,L_K)\dual$ is admissible in the sense of Faltings' \cite[Definition 1]{Faltings07}. Note that Faltings uses
 contravariant Dieudonn\'e modules. By \cite[Theorems 9 and 14]{Faltings07}, $L_K=\BD(\rho)_K(\Lie X\dual)_K\dual$ for a Barsotti-Tate group $X$ over $\CO_K$ and a quasi-isogeny $\rho:\BX_{\CO_K/(p)}\to X_{\CO_K/(p)}$, hence $L_K$ lies in the image of $\breve\pi_1^\an$.
\end{proof}

There are many examples showing that only in rare cases $\CF^\an_a=\CF^\an_{wa}$. We mention one here. Similar examples are due to A.\ Genestier and V.\ Lafforgue.

\begin{bigexample} \label{Ex5.4}
Let $D=K_0^{\oplus 5}$ and $\phi_D=\left(\raisebox{0.55cm}{\xymatrix @R=0pc @C=0.4pc{0 \ar@{.}[ddrr]& & p^{-3} \\ 1 \ar@{.}[dr] \\ & 1 & 0 }}\right)\!\cdot\phi$\,. Then $h=5\,,\, d=3$ and $\CF=\Grass(2,5)$. Since the isocrystal $(D,\phi_D)$ is simple $\CF^\an_{wa}=\CF^\an$. Let $L=L_K\in\CF^\an$. Then
\[
\CM_{-3,5}\es=\es \calD\es\supset\es\CM_L\es\supset\es t\,\calD\es\cong\es \CM_{2,5}\,.
\]
By Theorem~\ref{ThmKedlaya}, $\CM_L\cong\bigoplus_i \CM_{c_i,d_i}$ with $\sum_i d_i=\rk\CM_L=5$ and $\sum_i c_i=\deg\CM_L=0$. Moreover by \cite[Lemma 3.4.8]{Kedlaya} all the weights ${c_i}/{d_i}$ must lie between $-3/5$ and $2/5$. So either $\CM_L\cong\CM_{0,1}^{\oplus 5}$ or $\CM_L\cong\CM_{-1,2}\oplus\CM_{1,3}$. 

Now one easily checks that $\Hom_\phi\bigl(\CM_{-1,2},\calD\bigr)\;=\;\Hom_\phi\bigl(\CM_{-1,2},\CM_{-3,5}\bigr)\;=$
\[
\Biggl\{\,A\;=\;\left(  \begin{array}{cc}
\phi^5(x) & x \\ \phi^{11}(x) & \phi^6(x) \\ \phi^{17}(x) & \phi^{12}(x) \\ \phi^{23}(x) & \phi^{18}(x) \\ \phi^{29}(x) & \phi^{24}(x)   \end{array}\right)
\DS:\es x\;=\;\sum_{\nu\in\BZ}p^\nu\phi^{-10\nu}([u]),\;
u\in\bE,\;0< v_\bE(u)<\infty
\Biggr\}\,.
\]
The bad situation $\CM_L\cong\CM_{-1,2}\oplus\CM_{1,3}$ occurs if and only if $L_K$ is generated by the columns of such a matrix $\theta(A)\in C^{5\times 2}$, since then the homomorphism $A$ factors through $\CM_L$ and this forbids $\CM_L\cong\CM_{0,1}^{\oplus 5}$ by \cite[Lemma 3.4.8]{Kedlaya}.
Since obviously such $L_K$ exist, this proves that the inclusion $\CF^\an_a\subset\CF^\an_{wa}$ is strict.
\end{bigexample}

\forget{
\begin{remark} 
The example can easily be adapted to the general situation showing that only in rare cases $\CF^\an_a$ will equal $\CF^\an_{wa}$. Moreover, the question whether the two sets are equal depends largely on the combinatorics of the weights.
\end{remark}
}

\begin{remark} \label{Rem3.8} (on the proof of Theorem~\ref{Thm3.3}.)
One can explain the idea for the proof of Theorem~\ref{Thm3.3} by means of this example. Namely the complement $\CF^\an\setminus\CF^\an_a$ is the image of the continuous map from the compact set $\{u\in\wt\bE^+:\;1\le v_\bE(u)\le p^{10}\,\}$ given by $u\mapsto\theta(A)$.
\end{remark}


\textit{Acknowledgements.} The author is grateful to 
L.\ Berger, 
J.-M.\ Fontaine, V.\ Berkovich, A.\ Genestier, M.\ Kisin, V.\ Lafforgue, and M.\ Rapoport for various helpful discussions and their interest in this work.

{\small

}

\vfill

\noindent
Urs Hartl\\
University of M\"unster\\
Institute of Mathematics\\
Einsteinstr.~62\\
D -- 48149 M\"unster\\
Germany\\[1mm]
E-mail: 
uhartl@uni-muenster.de

\end{document}